\newtheorem{theorem}{Theorem}[section]
\newtheorem{lemma}[theorem]{Lemma}
\newtheorem{corollary}[theorem]{Corollary}
\theoremstyle{definition}
\newtheorem{example}[theorem]{Example}
\theoremstyle{remark}
\numberwithin{equation}{section}
\def\R{\hbox{\bf\rlap{I}{\hbox to 2 pt{}}R}}
\def\stb#1#2{\mathop{#1\vphantom{\sum}}\limits_{\scriptstyle #2}}
\def\btb#1#2{\mathop{#1\vphantom{\int}}\limits_{\scriptstyle #2}}
\begin{document}


\title[Upper and Lower Bounds for Numerical Radii of Block Shifts]{Upper and Lower Bounds for Numerical Radii of Block Shifts}

\author[H.-L. Gau]{Hwa-Long Gau}
\address[Hwa-Long Gau]{Department of Mathematics, National Central University, Chungli 32001, Taiwan}
\email{hlgau@math.ncu.edu.tw}

\author[P.Y. Wu]{Pei Yuan Wu$^*$}
\address[Pei Yuan Wu]{Department of Applied Mathematics, National Chiao Tung University, Hsinchu 30010, Taiwan}
\email{pywu@math.nctu.edu.tw}

\dedicatory{Dedicated to Professor Heydar Radjavi on his 80th birthday}

  \thanks{$^*$Corresponding author}
%
\maketitle
%

\begin{abstract}
For any $n$-by-$n$ matrix $A$ of the form
\[\left[\begin{array}{cccc} 0 & A_1 & & \\ & 0 & \ddots & \\ & & \ddots & A_{k-1} \\ & & & 0\end{array}\right],\]
we consider two $k$-by-$k$ matrices
\[A'=\left[\begin{array}{cccc} 0 & \|A_1\| & & \\ & 0 & \ddots & \\ & & \ddots & \|A_{k-1}\| \\ & & & 0\end{array}\right] \  \mbox{ and } \
A''=\left[\begin{array}{cccc} 0 & m(A_1) & & \\ & 0 & \ddots & \\ & & \ddots & m(A_{k-1}) \\ & & & 0\end{array}\right],\]
where $\|\cdot\|$ and $m(\cdot)$ denote the operator norm and minimum modulus of a matrix, respectively. It is shown that the numerical radii $w(\cdot)$ of $A$, $A'$ and $A''$ are related by the inequalities $w(A'')\le w(A)\le w(A')$. We also determine exactly when either of the inequalities becomes an equality.\\
\textbf{Keywords:}  Numerical radius, block shift, minimum modulus.  \\
\textbf{MSC(2010):}  Primary: 15A60; Secondary: 47A12.
\end{abstract}

\section{\bf Introduction}

An $n$-by-$n$ complex matrix $A$ is call a \emph{block shift} if it is of the form
\[\left[\begin{array}{cccc} 0 & A_1 & & \\ & 0 & \ddots & \\ & & \ddots & A_{k-1} \\ & & & 0\end{array}\right],\]
where the $A_j$'s are in general rectangular matrices. In this paper, we obtain sharp upper and lower bounds for the numerical radius $w(A)$ of such an $A$. Recall that the \emph{numerical radius} $w(B)$ of an $n$-by-$n$ matrix $B$ is the quantity
\[\max\{|\langle Bx, x\rangle| : x\in\mathbb{C}^n, \|x\|=1\},\]
where $\langle\cdot, \cdot\rangle$ and $\|\cdot\|$ denote the standard inner product and norm of vectors in $\mathbb{C}^n$, respectively. Note that $w(B)$ is the radius of the smallest circular disc centered at the origin which contains the \emph{numerical range}
\[W(B)=\{\langle Bx, x\rangle : x\in\mathbb{C}^n, \|x\|=1\}\]
of $B$. For properties of the numerical range and numerical radius, the reader is referred to \cite[Chapter 22]{3} or \cite[Chapter 1]{4}.

\vspace{3mm}

Note that if $A$ is a block shift of the above form, then it is unitarily similar to $e^{i\theta}A$ for all real $\theta$. Hence its numerical range is a closed circular disc centered at the origin with radius equal to its numerical radius. To estimate the latter, we consider two $k$-by-$k$ scalar matrices
\[A'=\left[\begin{array}{cccc} 0 & \|A_1\| & & \\ & 0 & \ddots & \\ & & \ddots & \|A_{k-1}\| \\ & & & 0\end{array}\right] \  \mbox{ and } \
A''=\left[\begin{array}{cccc} 0 & m(A_1) & & \\ & 0 & \ddots & \\ & & \ddots & m(A_{k-1}) \\ & & & 0\end{array}\right],\]
where $\|A_j\|$ and $m(A_j)$, $1\le j\le k-1$, are the operator norm and minimum modulus of $A_j$, respectively. Recall that the minimum modulus $m(B)$ of an $m$-by-$n$ matrix $B$ is, by definition, $\min\{\|Bx\| : x\in\mathbb{C}^n, \|x\|=1\}$. In Sections \ref{s2} and \ref{s3} below, we show that $w(A'')\le w(A)\le w(A')$ always hold, and that, under the extra condition that the $A_j$'s are all nonzero (resp., under $A_1\ldots A_{k-1}\neq 0$), $w(A)=w(A')$ (resp., $w(A)=w(A'')$) implies that $A'$ (resp., $A''$) is a direct summand of $A$ (cf. Theorems \ref{t21} and \ref{t31}). Examples are given showing that the nonzero conditions on the $A_j$'s are essential.

\vspace{3mm}

\section{\bf Upper bound}\label{s2}

The main result of this section is the following theorem.

\begin{theorem}\label{t21}
Let
\begin{equation}
A=\left[\begin{array}{cccc} 0 & A_1 & & \\ & 0 & \ddots & \\ & & \ddots & A_{k-1} \\ & & & 0\end{array}\right] \ \ \mbox{ on } \ \mathbb{C}^n=\mathbb{C}^{n_1}\oplus\cdots\oplus \mathbb{C}^{n_k} \label{eq1}
\end{equation}
be an $n$-by-$n$ block shift, where $A_j$ is an $n_j$-by-$n_{j+1}$ matrix for $1\le j\le k-1$, and let
\[A'=\left[\begin{array}{cccc} 0 & \|A_1\| & & \\ & 0 & \ddots & \\ & & \ddots & \|A_{k-1}\| \\ & & & 0\end{array}\right] \ \ \mbox{ on } \ \mathbb{C}^k.\]
Then {\rm (a)} $w(A)\le w(A')$, and {\rm (b)} under the assumption that $A_j\neq 0$ for all $j$, $w(A)=w(A')$ if and only if $A$ is unitarily similar to $A'\oplus B$, where $B$ is a block shift with $w(B)\le w(A')$.
\end{theorem}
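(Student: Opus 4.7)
The plan is to prove (a) by a direct Cauchy--Schwarz estimate and then deduce the equality case in (b) by tracing when every inequality in that estimate is tight.

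For (a), write any unit vector $x \in \mathbb{C}^n$ as $x = (x_1, \ldots, x_k)$ with $x_j \in \mathbb{C}^{n_j}$. Expanding the block product gives
\[ |\langle Ax, x\rangle| = \Bigl|\sum_{j=1}^{k-1} \langle A_j x_{j+1}, x_j\rangle\Bigr| \le \sum_{j=1}^{k-1} \|A_j\|\, \|x_j\|\, \|x_{j+1}\| = \langle A' y, y\rangle, \]
where $y = (\|x_1\|, \ldots, \|x_k\|)$ is a non-negative unit vector in $\mathbb{R}^k$. Since $A'$ has non-negative entries, $\langle A'y, y\rangle \ge 0$, so the right side is at most $w(A')$; taking the supremum over $x$ yields (a).

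The ``if'' direction of (b) is immediate from $w(A' \oplus B) = \max(w(A'), w(B))$. For ``only if,'' suppose $w(A) = w(A')$ and pick a unit $x$ with $|\langle Ax, x\rangle| = w(A')$. Equality throughout the chain above forces: (i) $y = (\|x_1\|, \ldots, \|x_k\|)$ is a Rayleigh maximizer for the real symmetric tridiagonal matrix $\mathrm{Re}(A') = (A' + (A')^*)/2$, whose top eigenvalue equals $w(A')$ because $W(A')$ is a disc centered at the origin; (ii) the scalars $\langle A_j x_{j+1}, x_j\rangle$ share a common phase; (iii) $A_j x_{j+1}$ is a scalar multiple of $x_j$; and (iv) $\|A_j x_{j+1}\| = \|A_j\|\, \|x_{j+1}\|$. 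Under the hypothesis that every $A_j$ is nonzero, each off-diagonal entry $\|A_j\|/2$ of $\mathrm{Re}(A')$ is strictly positive, so this matrix is a non-negative irreducible symmetric tridiagonal matrix; Perron--Frobenius then says its top eigenvector is unique up to scaling and strictly positive, forcing $\|x_j\| > 0$ for all $j$.

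Set $u_j = x_j/\|x_j\|$. Combining (iii) and (iv) yields $A_j u_{j+1} = \beta_j u_j$ with $|\beta_j| = \|A_j\|$, and the phase consistency (ii) together with the freedom to rescale each $u_j$ by a unit scalar permits a recursive adjustment so that $\beta_j = \|A_j\|$. Since $u_{j+1}$ attains the norm of $A_j$, it is a unit eigenvector of $A_j^* A_j$ for eigenvalue $\|A_j\|^2$; combined with $A_j u_{j+1} = \|A_j\| u_j$, this yields $A_j^* u_j = \|A_j\| u_{j+1}$. Letting $\hat u_j \in \mathbb{C}^n$ denote $u_j$ embedded in the $j$-th block, the identities $A \hat u_{j+1} = \|A_j\|\,\hat u_j$ and $A^* \hat u_j = \|A_j\|\,\hat u_{j+1}$ show that $V = \mathrm{span}\{\hat u_1, \ldots, \hat u_k\}$ reduces $A$, with $A|_V$ represented by $A'$ in this basis. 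Because $V = \bigoplus_j \mathrm{span}\{u_j\}$ respects the block decomposition, so does $V^\perp$; hence $B := A|_{V^\perp}$ is again a block shift, and $w(A) = \max(w(A'), w(B)) = w(A')$ forces $w(B) \le w(A')$. The main obstacle I expect is establishing positivity of $y$ in step (i)---this is precisely where the hypothesis that every $A_j$ is nonzero enters, and without it Perron--Frobenius fails and the desired splitting need not occur.
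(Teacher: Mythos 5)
Your proof is correct and follows essentially the same route as the paper: the identical chain of inequalities for (a), and for (b) the same reducing subspace spanned by the embedded vectors $\widehat{u}_1,\dots,\widehat{u}_k$ on which $A$ restricts to $A'$. The only differences are at the lemma level, and both are sound: where the paper cites a result on numerical ranges of nonnegative matrices to get $\|x_j\|>0$, you argue directly via Perron--Frobenius applied to $\mathrm{Re}(A')$, and where the paper invokes an extended Riesz--Sz.-Nagy lemma to obtain $A_j^*u_j=\|A_j\|u_{j+1}$, you use the elementary fact that a norm-attaining unit vector is an eigenvector of $A_j^*A_j$ for the eigenvalue $\|A_j\|^2$.
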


\begin{proof}
(a) Let $x=[x_1 \ \ldots \ x_k]^T$ be a unit vector in $\mathbb{C}^n$ such that $|\langle Ax, x\rangle|=w(A)$. Hence
\begin{align}
& \, w(A)=\left|\Big\langle\left[\begin{array}{cccc} 0 & A_1 & & \\ & 0 & \ddots & \\ & & \ddots & A_{k-1} \\ & & & 0\end{array}\right]\left[\begin{array}{c} x_1\\ \vdots \\ x_k\end{array}\right], \left[\begin{array}{c} x_1\\ \vdots \\ x_k\end{array}\right]\Big\rangle\right|\nonumber\\
= & \, \Big|\sum_{j=1}^{k-1}\langle A_jx_{j+1}, x_j\rangle\Big|\nonumber\\
\le & \, \sum_{j=1}^{k-1}|\langle A_jx_{j+1}, x_j\rangle|\nonumber\\
\le & \, \sum_{j=1}^{k-1}\|A_j\|\|x_{j+1}\|\|x_j\| \label{eq2}\\
= & \, \Big\langle\left[\begin{array}{cccc} 0 & \|A_1\| & & \\ & 0 & \ddots & \\ & & \ddots & \|A_{k-1}\| \\ & & & 0\end{array}\right]\left[\begin{array}{c} \|x_1\|\\ \vdots \\ \|x_k\|\end{array}\right], \left[\begin{array}{c} \|x_1\|\\ \vdots \\ \|x_k\|\end{array}\right]\Big\rangle\nonumber\\
\le & \, w(A'),\label{eq3}
\end{align}
where the last inequality follows from the fact that $[\|x_1\| \ \ldots \ \|x_k\|]^T$ is a unit vector in $\mathbb{C}^k$.

\vspace{3mm}

(b) Assume that $A_j\neq 0$ for all $j$, and that $w(A)=w(A')$. Then we have equalities throughout the chain of inequalities in (a). Since $A'$ is an (entrywise) nonnegative matrix with irreducible real part, the equality in \eqref{eq3} yields, by \cite[Proposition 3.3]{5}, that $x_j\neq 0$ for all $j$. Let $\widehat{x}_j=[0 \ \ldots \ 0 \ \stb{x_j}{j{\rm th}} \ 0 \ \ldots \ 0]^T$ for $1\le j\le k$, and let $K$ be the subspace of $\mathbb{C}^n$ spanned by the $\widehat{x}_j$'s. The equality in \eqref{eq2} implies that
\begin{equation}
|\langle A_jx_{j+1}, x_j\rangle|=\|A_jx_{j+1}\|\|x_j\|=\|A_j\|\|x_{j+1}\|\|x_j\|.\label{eq4}
\end{equation}
Hence $A_jx_{j+1}=a_jx_j$ for some scalar $a_j$. Therefore, $A\widehat{x}_1=0$ and
\[A\widehat{x}_j=[0 \ \ldots \ 0 \ \stb{A_{j-1}x_j}{(j-1){\rm st}} \ 0 \ \ldots \ 0]^T=[0 \ \ldots \ 0 \ \stb{a_{j-1}x_{j-1}}{(j-1){\rm st}} \ 0 \ \ldots \ 0]^T=a_{j-1}\widehat{x}_{j-1}\]
is in $K$ for all $j$, $2\le j\le k$. This shows that $AK\subseteq K$.

\vspace{3mm}

We next prove that $A^*K\subseteq K$. Indeed, we have $A^*\widehat{x}_j=[0 \ \ldots \ 0 \ \stb{A_{j}^*x_j}{(j+1){\rm st}} \ 0 \ \ldots \ 0]^T$ for $1\le j\le k-1$. Since
\[|a_j|\|x_j\|^2=\|a_jx_j\|\|x_j\|=\|A_jx_{j+1}\|\|x_j\|=\|A_j\|\|x_{j+1}\|\|x_j\|\]
by \eqref{eq4}, the nonzeroness of the $A_j$'s and $x_j$'s yields the same for the $a_j$'s. Letting $B_j=A_j/\|A_j\|$ and $y_j=(\|A_j\|/a_j)x_{j+1}$, we have $B_jy_j=(1/a_j)A_jx_{j+1}=x_j$ with $\|B_j\|=1$ and
\[\|y_j\|=\frac{\|A_j\|}{|a_j|}\|x_{j+1}\|=\frac{\|A_jx_{j+1}\|}{|a_j|}=\|x_j\|\]
by \eqref{eq4}. It follows from an extended lemma of Riesz and Sz.-Nagy that $B_j^*x_j=y_j$ (cf. \cite[p. 215]{7}). Therefore, we have $A_j^*x_j=(\|A_j\|^2/a_j)x_{j+1}$, which shows that $A_j^*\widehat{x}_j=(\|A_j\|^2/a_j)\widehat{x}_{j+1}$ is in $K$ for $1\le j\le k-1$. Moreover, we also have $A^*\widehat{x}_k=0$. Thus $A^*K\subseteq K$ as asserted.

\vspace{3mm}

Since $\{\widehat{x}_j/\|x_j\|\}_{j=1}^k$ is an orthonormal basis of $K$, $A(\widehat{x}_1/\|x_1\|)=0$, and
\begin{align*}
& \, A(\frac{\widehat{x}_j}{\|x_j\|})=\frac{a_{j-1}\|x_{j-1}\|}{\|x_j\|}\frac{\widehat{x}_{j-1}}{\|x_{j-1}\|}
=\frac{a_{j-1}}{|a_{j-1}|}\frac{\|a_{j-1}x_{j-1}\|}{\|x_j\|}\frac{\widehat{x}_{j-1}}{\|x_{j-1}\|}\\
=& \, \frac{a_{j-1}}{|a_{j-1}|}\frac{\|A_{j-1}x_{j}\|}{\|x_j\|}\frac{\widehat{x}_{j-1}}{\|x_{j-1}\|}
=\frac{a_{j-1}}{|a_{j-1}|}\|A_{j-1}\|\frac{\widehat{x}_{j-1}}{\|x_{j-1}\|}
\end{align*}
for $2\le j\le k$ by \eqref{eq4}, we derive that the restriction $A|K$ is unitarily similar to $A'$. Thus $A$ is unitarily similar to $A'\oplus(A|K^{\perp})$. We now show that $A|K^{\perp}$ is also unitarily similar to a block shift. Indeed, let $\widehat{H}_j=0\oplus\cdots\oplus 0\oplus \stb{\mathbb{C}^{n_j}}{j{\rm th}}\oplus 0\oplus\cdots\oplus 0$, $K_j=\mathbb{C}^{n_j}\ominus\bigvee\{x_j\}$, and $\widehat{K}_j=0\oplus\cdots\oplus 0\oplus \stb{K_j}{j{\rm th}}\oplus 0\oplus\cdots\oplus 0$ for $1\le j\le k$. Then $K^{\perp}=K_1\oplus\cdots\oplus K_k$. Since $A\widehat{H}_{j+1}\subseteq\widehat{H}_j$ and $A^*\widehat{x}_j\in\bigvee\{\widehat{x}_{j+1}\}$ from before, we have $A\widehat{K}_{j+1}\subseteq \widehat{K}_j$ for $1\le j\le k-1$. Moreover, $A\widehat{H}_k=\{0\}$ implies that $A\widehat{K}_k=\{0\}$. We conclude that $B\equiv A|K^{\perp}$ is unitarily similar to a block shift with $w(B)\le w(A)=w(A')$. This proves one direction of (b). The converse is trivial.
\end{proof}

\vspace{1mm}

\begin{corollary}\label{c22}
Let $A$ be an $n$-by-$n$ block shift as in \eqref{eq1}, and let $M=\max_j\|A_j\|$. Then

{\rm (a)} $w(A)\le M\cdot\cos(\pi/(k+1))$, and

{\rm (b)} $w(A)=M\cdot\cos(\pi/(k+1))$ if and only if $A$ is unitarily similar to $(M\cdot J_k)\oplus B$, where $B$ is a block shift with $w(B)\le M\cdot\cos(\pi/(k+1))$.
\end{corollary}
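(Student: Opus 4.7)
The plan is to apply Theorem \ref{t21} to reduce the estimation of $w(A)$ to that of $w(A')$, and then to compare $A'$ with the scalar Jordan block $MJ_k$ using the rotational invariance of $W(A')$ together with Perron--Frobenius monotonicity applied to the real parts.

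For part (a), I would first note that $A'$ is itself a block shift and so is unitarily similar to $e^{i\theta}A'$ for every real $\theta$; hence $W(A')$ is a disc centered at the origin, and
\[
w(A')=\lambda_{\max}\!\bigl(\operatorname{Re}A'\bigr)=\lambda_{\max}\!\bigl(\tfrac{1}{2}(A'+(A')^{T})\bigr).
\]
The matrix $\operatorname{Re}A'$ is the real symmetric tridiagonal matrix with $\|A_j\|/2$ on both the super- and sub-diagonals, and it is entrywise dominated by $\operatorname{Re}(MJ_k)$, the analogous tridiagonal matrix with all super/sub-diagonal entries equal to $M/2$, whose largest eigenvalue is the classical value $M\cos(\pi/(k+1))$. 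Since both matrices are nonnegative and symmetric, monotonicity of the Perron root under entrywise domination yields $\lambda_{\max}(\operatorname{Re}A')\le M\cos(\pi/(k+1))$; chaining this with Theorem \ref{t21}(a) gives $w(A)\le w(A')\le M\cos(\pi/(k+1))$.

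For part (b), the converse is immediate from $w((MJ_k)\oplus B)=\max\{M\cos(\pi/(k+1)),w(B)\}=M\cos(\pi/(k+1))$. For the forward direction, the case $M=0$ forces $A=0$ and is trivial, so I assume $M>0$ and $w(A)=M\cos(\pi/(k+1))$. Then equality holds throughout part (a); in particular $w(A)=w(A')$ and $\lambda_{\max}(\operatorname{Re}A')=\lambda_{\max}(\operatorname{Re}(MJ_k))$. Combined with the entrywise bound $0\le\operatorname{Re}A'\le\operatorname{Re}(MJ_k)$ and the irreducibility of the tridiagonal $\operatorname{Re}(MJ_k)$ (where $M>0$ enters), the equality case of the Perron--Frobenius theorem forces $\operatorname{Re}A'=\operatorname{Re}(MJ_k)$, i.e.\ $\|A_j\|=M$ for every $j$. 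In particular $A'=MJ_k$ and all $A_j$ are nonzero, so Theorem \ref{t21}(b) applies and yields a unitary similarity of $A$ with $A'\oplus B=(MJ_k)\oplus B$ for some block shift $B$ with $w(B)\le w(A')=M\cos(\pi/(k+1))$.

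The main obstacle I anticipate is exactly the implication $w(A')=M\cos(\pi/(k+1))\Longrightarrow\|A_j\|=M$ for all $j$; every other step is either immediate or follows from standard facts about the $k$-by-$k$ Jordan block and Perron--Frobenius monotonicity. The key tool for this passage is the strict monotonicity of the Perron eigenvalue for irreducible nonnegative matrices, which converts entrywise strict inequality into strict spectral radius inequality, and it is precisely the analogue of the role that \cite[Proposition 3.3]{5} plays inside the proof of Theorem \ref{t21}.
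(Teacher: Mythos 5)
Your proposal is correct. At the top level it follows the same route as the paper: reduce to $A'$ via Theorem \ref{t21}(a) for the bound and Theorem \ref{t21}(b) for the equality case. The difference is in how the weighted shift $A'$ is compared with $MJ_k$: the paper simply cites \cite[Lemma 5]{8}, which states that a $k$-by-$k$ weighted shift with weights $a_j$ has numerical radius at most $(\max_j|a_j|)\cos(\pi/(k+1))$, with equality exactly when all $|a_j|$ equal the maximum. You instead prove this lemma from scratch: since $W(A')$ is a disc centered at the origin, $w(A')=\lambda_{\max}(\operatorname{Re}A')$, and then entrywise monotonicity of the Perron root gives (a), while the strict-monotonicity/equality case of Perron--Frobenius for the irreducible matrix $\operatorname{Re}(MJ_k)$ (valid once you separate off the trivial case $M=0$, as you do) forces $\|A_j\|=M$ for all $j$, which both identifies $A'$ with $MJ_k$ and supplies the nonvanishing hypothesis needed to invoke Theorem \ref{t21}(b). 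All steps check out, including the identification $\rho(\operatorname{Re}(MJ_k))=M\cos(\pi/(k+1))$ from the known spectrum of the tridiagonal $0$--$1$ matrix. What your version buys is self-containedness and a transparent explanation of \emph{why} the extremal weighted shifts are exactly the scalar multiples of $J_k$; what the paper's version buys is brevity and consistency with its other uses of \cite{8} (the same lemma is reused for Corollary \ref{c34}). Note also that your Perron--Frobenius equality argument is the exact scalar analogue of the paper's use of \cite[Proposition 3.3]{5} inside the proof of Theorem \ref{t21}(b), as you observe.
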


\vspace{3mm}

Here $J_k$ denotes the $k$-by-$k$ \emph{Jordan block}
\[\left[\begin{array}{cccc} 0 & 1 & & \\ & 0 & \ddots & \\ & & \ddots & 1 \\ & & & 0\end{array}\right],\]
whose numerical range is known to be $\{z\in \mathbb{C}: |z|\le\cos(\pi/(k+1))\}$ (cf. \cite{6}).

\vspace{3mm}

\begin{proof}[Proof of Corollary \ref{c22}] (a) is an easy consequence of Theorem \ref{t21} (a) and \cite[Lemma 5 (1)]{8} while (b) follows from Theorem \ref{t21} (b) and \cite[Lemma 5 (2)]{8}.
\end{proof}

\vspace{3mm}

We remark that the assertion in Theorem \ref{t21} (b) still holds for $n\le 5$ even without the nonzero assumption on the $A_j$'s. This can be proven via a case-by-case verification by invoking, in most cases, the known result on the numerical ranges of square-zero matrices (cf. \cite[Theorem 2.1]{9}), which we omit. This is no longer the case for $n\ge 6$. Here we give a counterexample for $n=6$.

\vspace{3mm}

\begin{example}\label{e23}
Let
\[A=\left[\begin{array}{cccccc}
0 & \sqrt{2} & & & & \\
 & 0 & 0 & & & \\
 & & 0 & 1 & 0 & \\
 & & & 0 & 0 & 0 \\
 & & & 0 & 0 & 1 \\
 & & & & & 0
\end{array}\right]\]
with $A_1=[\sqrt{2}]$, $A_2=[0]$, $A_3=[1 \ 0]$ and $A_4=\left[\begin{array}{c} 0\\ 1\end{array}\right]$. Then
\[A'=\left[\begin{array}{ccccc}
0 & \sqrt{2} & & &  \\
 & 0 & 0 & &  \\
 & & 0 & 1 &  \\
 & & & 0 & 1  \\
 & & &  & 0
\end{array}\right],\]
and $A$ and $A'$ are unitarily similar to
\[\left[\begin{array}{cc} 0 & \sqrt{2} \\ 0 & 0 \end{array}\right]\oplus
\left[\begin{array}{cc} 0 & 1 \\ 0 & 0\end{array}\right]\oplus
\left[\begin{array}{cc} 0 & 1 \\ 0 & 0\end{array}\right] \ \ \mbox{ and } \ \
\left[\begin{array}{cc} 0 & \sqrt{2} \\ 0 & 0\end{array}\right]\oplus
\left[\begin{array}{ccc} 0 & 1 & \\ & 0 & 1 \\ & & 0\end{array}\right],\]
respectively. Hence $w(A)=w(A')=\sqrt{2}/2$, but $A'$ is not a direct summand of $A$. To see the latter, note that $\ker A\cap\ker A^*=\{0\}$. Hence $A$ cannot have the 1-by-1 zero matrix $[0]$ as a direct summand, and thus $A$ cannot be unitarily similar to $A'\oplus [0]$, or $A'$ is not a direct summand of $A$.
\end{example}

\vspace{3mm}

\section{\bf Lower bound}\label{s3}

Here is the main result of this section.

\begin{theorem}\label{t31}
Let $A$ be an $n$-by-$n$ block shift as in \eqref{eq1}, and let
\[A''=\left[\begin{array}{cccc} 0 & m(A_1) & & \\ & 0 & \ddots & \\ & & \ddots & m(A_{k-1}) \\ & & & 0\end{array}\right] \ \ \mbox{ on } \ \mathbb{C}^k.\]
Then {\rm (a)} $w(A)\ge w(A'')$, and {\rm (b)} under the assumption that $A_1\ldots A_{k-1}\neq 0$, $w(A)=w(A'')$ if and only if $A$ is unitarily similar to $A''\oplus C$, where $C$ is a block shift with $w(C)\le w(A'')$.
\end{theorem}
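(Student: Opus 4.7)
The plan is to adapt the strategy of the proof of Theorem~\ref{t21}(b), with the minimum modulus now playing the role of the operator norm. The converse direction is immediate since $w(M\oplus N)=\max\{w(M),w(N)\}$ for any direct sum, so I focus on the forward direction: assume $A_1\cdots A_{k-1}\ne 0$ and $w(A)=w(A'')$.

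The idea is to produce, inside $A$, an explicit $k$-dimensional reducing subspace on which $A$ looks like $A''$. First, using $A_1\cdots A_{k-1}\ne 0$, pick a unit vector $v_k\in\mathbb{C}^{n_k}$ with $A_j A_{j+1}\cdots A_{k-1}v_k\ne 0$ for every $j$, and define $v_j=A_jv_{j+1}/\|A_jv_{j+1}\|$ recursively for $j=k-1,\dots,1$; writing $c_j=\|A_jv_{j+1}\|$, each $v_j$ is a unit vector and $A_jv_{j+1}=c_jv_j$ with $c_j>0$ and $c_j\ge m(A_j)$. Letting $\widehat v_j\in\mathbb{C}^n$ be the vector whose only nonzero block is $v_j$ in the $j$th slot, the subspace $L=\bigvee\{\widehat v_1,\ldots,\widehat v_k\}$ is $A$-invariant, and $A|L$ in the orthonormal basis $\{\widehat v_j\}$ is the scalar $k$-by-$k$ weighted shift with weights $c_j$.

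The crucial step is to show $c_j=m(A_j)$ for every $j$. The real parts of the scalar weighted shifts with weights $c_j$ and $m(A_j)$ are nonnegative symmetric tridiagonal matrices, and the first is irreducible (since all $c_j>0$) and dominates the second entrywise. A strict Perron--Frobenius monotonicity argument---using the strictly positive Perron eigenvector of $\operatorname{Re}(A|L)$, or (if $\operatorname{Re}(A'')$ happens to be reducible) Cauchy interlacing of $\operatorname{Re}(A|L)$ with its principal sub-matrix on the support of a Perron eigenvector of $\operatorname{Re}(A'')$---shows that unless $c_j=m(A_j)$ for all $j$ one has $w(A|L)>w(A'')$. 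Since $w(A)\ge w(A|L)\ge w(A'')$ and $w(A)=w(A'')$, we conclude $c_j=m(A_j)$ for every $j$; in particular, all $m(A_j)>0$. Then $v_{j+1}$ attains the minimum modulus of $A_j$, so $A_j^*A_jv_{j+1}=m(A_j)^2v_{j+1}$, and combining with $A_jv_{j+1}=m(A_j)v_j$ yields $A_j^*v_j=m(A_j)v_{j+1}$.

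From $A\widehat v_j=m(A_{j-1})\widehat v_{j-1}$ and $A^*\widehat v_j=m(A_j)\widehat v_{j+1}$ (with the usual boundary conventions $\widehat v_0=\widehat v_{k+1}=0$), the subspace $K:=L$ is both $A$- and $A^*$-invariant and therefore reduces $A$, with $A|K$ having matrix $A''$ in the basis $\{\widehat v_j\}$. The orthogonal complement decomposes as $K^{\perp}=K_1\oplus\cdots\oplus K_k$ with $K_j=\mathbb{C}^{n_j}\ominus\bigvee\{v_j\}$; for $w\in K_{j+1}$, $\langle A_jw,v_j\rangle=\langle w,A_j^*v_j\rangle=m(A_j)\langle w,v_{j+1}\rangle=0$, so $A_j(K_{j+1})\subseteq K_j$, making $A|K^{\perp}$ a block shift whose numerical radius is at most $w(A)=w(A'')$ by compression; setting $C=A|K^{\perp}$ finishes the proof. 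The main obstacle is the Perron--Frobenius strict-monotonicity step: the hypothesis $A_1\cdots A_{k-1}\ne 0$ does not by itself force all $m(A_j)$ to be positive, so $\operatorname{Re}(A'')$ may be reducible, and the strict inequality must be extracted from the irreducible side $\operatorname{Re}(A|L)$---a slight but essential reinforcement of the analogous Perron--Frobenius step used in the proof of Theorem~\ref{t21}(b).
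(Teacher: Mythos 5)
Your treatment of part (b) is essentially the paper's argument and is correct, but the proposal as written does not prove part (a) of the theorem. Your invariant subspace $L$, and hence the chain $w(A)\ge w(A|L)\ge w(A'')$, exists only when $A_1\cdots A_{k-1}\neq 0$: if the product vanishes there is no unit vector $u$ with all partial products $A_j\cdots A_{k-1}u$ nonzero, and the construction collapses. This degenerate case is not vacuous (the paper's $4$-by-$4$ example in Section \ref{s3}, with $A_1=[1\ \ 1]$ and $A_2=[1\ \ {-1}]^T$, has $A_1A_2=0$), and it is where the genuine work of part (a) lies. The paper handles it by perturbing the blocks to $B_j$ with $\|B_j-A_j\|<\varepsilon$ and $B_1\cdots B_{k-1}\neq 0$ (Lemma \ref{l33}, whose inductive proof is not entirely trivial), applying the generic case to get $w(B)\ge w(B'')$, and then letting $\varepsilon\to 0$ using the Lipschitz estimate $|m(A_j)-m(B_j)|\le\|A_j-B_j\|$ together with the continuity of $w(\cdot)$. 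Some argument of this kind must be supplied; without it you have only proved (a) under the extra hypothesis $A_1\cdots A_{k-1}\neq 0$.

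For part (b) your route coincides with the paper's: the vectors $\widehat v_j$ built from $u=v_k$, the forced equalities $c_j=m(A_j)$, the deduction $A_j^*A_jv_{j+1}=m(A_j)^2v_{j+1}$ from positive semidefiniteness of $A_j^*A_j-m(A_j)^2I$, the resulting reducing subspace $K$ with $A|K$ unitarily similar to $A''$, and the verification that $A|K^{\perp}$ is again a block shift. The one place you genuinely diverge is in extracting $c_j=m(A_j)$: the paper does this by asserting that $\operatorname{Re}(A'')$ is irreducible ``because $A_j\neq 0$ for all $j$'' and invoking strict positivity of its Perron vector $y$; but $A_j\neq 0$ only gives $\|A_j\|>0$, not $m(A_j)>0$, so that irreducibility is not justified by the hypothesis. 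Your strict Perron--Frobenius comparison of the irreducible $\operatorname{Re}(A|L)$ (all $c_j>0$) with the entrywise smaller $\operatorname{Re}(A'')$ is valid --- if the two differ then $\lambda_{\max}(\operatorname{Re}(A''))<\lambda_{\max}(\operatorname{Re}(A|L))\le w(A)$ --- and it yields $m(A_j)=c_j>0$ as a by-product rather than as an unproved premise. This is a genuine, and welcome, tightening of that step; the Cauchy-interlacing fallback you mention is unnecessary once the standard monotonicity argument (maximize the Rayleigh quotient of $\operatorname{Re}(A'')$ at a nonnegative vector and compare) is carried out.
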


\vspace{3mm}

Our first lemma gives some basic properties of the minimum modulus of a rectangular matrix. For a square matrix (or, for that matter, an operator on a possibly infinite-dimensional Hilbert space), these appeared in \cite[Theorem 1]{2}.

\vspace{3mm}

\begin{lemma}\label{l32}
Let $A$ be an $m$-by-$n$ matrix. Then

{\rm (a)} $m(A)>0$ if and only if $A$ is left invertible, and

{\rm (b)} $m(A)$ equals the minimum singular value of $A$. In particular, if $m<n$, then $m(A)=0$.
\end{lemma}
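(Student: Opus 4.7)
The plan is to reduce both claims to the diagonal case via the singular value decomposition. I would write $A=U\Sigma V^{*}$ with $U$ an $m\times m$ unitary, $V$ an $n\times n$ unitary, and $\Sigma$ the $m\times n$ ``diagonal'' matrix carrying the singular values $\sigma_{1}\ge\sigma_{2}\ge\cdots\ge\sigma_{r}\ge 0$ on its $(i,i)$-positions (where $r=\min(m,n)$) and zeros elsewhere. Since unitaries preserve norms, the change of variable $y=V^{*}x$ is a bijection of the unit sphere of $\mathbb{C}^{n}$ to itself with $\|Ax\|=\|U\Sigma y\|=\|\Sigma y\|$, so $m(A)=m(\Sigma)$ and it suffices to analyze this diagonal quantity.

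For part (b), I would compute $\|\Sigma y\|^{2}$ explicitly. If $m\ge n$, then $\|\Sigma y\|^{2}=\sum_{i=1}^{n}\sigma_{i}^{2}|y_{i}|^{2}$, whose minimum over unit vectors is $\sigma_{n}^{2}$, attained at $y=e_{n}$; hence $m(A)=\sigma_{n}$, the smallest singular value. If $m<n$, then $\Sigma$ has a zero column (the $n$th, say), so $y=e_{n}$ gives $\Sigma y=0$ and $m(A)=0$. This is consistent with the minimum-singular-value formula once one views the singular values of $A$ as the $n$ nonnegative square roots of the eigenvalues of $A^{*}A$: when $m<n$, $A^{*}A$ has rank at most $m<n$, so $0$ is among them.

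For part (a), the argument is essentially definitional. The condition $m(A)>0$ says $\|Ax\|>0$ for every nonzero $x\in\mathbb{C}^{n}$, i.e., $A$ is injective as a linear map $\mathbb{C}^{n}\to\mathbb{C}^{m}$; and in finite dimensions this is equivalent to the existence of a left inverse---indeed, when $\ker A=\{0\}$ the matrix $A^{*}A$ is invertible and $(A^{*}A)^{-1}A^{*}$ is an explicit left inverse, while conversely any left inverse forces $\ker A=\{0\}$. Alternatively, (b) identifies $m(A)>0$ with the positivity of all $n$ singular values of $A$, i.e., with $A$ having full column rank $n$, which is exactly left invertibility. There is no real obstacle here: the lemma is a packaging of standard SVD facts, and the only thing to watch is the rectangular case $m<n$, where both the minimum-modulus and the left-invertibility side vanish simultaneously.
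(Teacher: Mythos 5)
Your proof is correct, and it reaches the same conclusion by a slightly different decomposition: you diagonalize via the full singular value decomposition $A=U\Sigma V^{*}$ and compute $\min_{\|y\|=1}\|\Sigma y\|^{2}=\sum_{i}\sigma_{i}^{2}|y_{i}|^{2}$ explicitly, whereas the paper uses the polar decomposition $A=V(A^{*}A)^{1/2}$ with $V$ a partial isometry satisfying $\ker V=\ker A$, so that $\|Ax\|=\|(A^{*}A)^{1/2}x\|$ and $m(A)$ is read off as the minimum eigenvalue of the positive semidefinite matrix $(A^{*}A)^{1/2}$. The two routes are close cousins --- both reduce the problem to the variational characterization of the smallest eigenvalue of $A^{*}A$ --- but yours is more explicitly computational while the paper's avoids choosing bases and transfers more directly to the operator-theoretic setting it cites. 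You also handle the one genuinely delicate point correctly: when $m<n$ the value $0$ must be counted among the singular values (you take them as the $n$ square roots of the eigenvalues of the $n\times n$ matrix $A^{*}A$, which matches the paper's convention), and your observation that $\Sigma$ then has a zero column gives the vanishing of $m(A)$ directly. For part (a), your argument via injectivity and the explicit left inverse $(A^{*}A)^{-1}A^{*}$ is the standard finite-dimensional version of the paper's remark that $m(A)>0$ is equivalent to the well-definedness and boundedness of $Ax\mapsto x$ on the range of $A$.
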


\begin{proof}
(a) Note that $m(A)>0$ means that there is a $c>0$ such that $\|Ax\|\ge c\|x\|$ for all $x$ in $\mathbb{C}^n$, which is equivalent to the well-definedness of the linear transformation $Ax \mapsto x$ from the range of $A$ to $\mathbb{C}^n$, or to the left-invertibility of $A$.

\vspace{3mm}

(b) Consider the polar decomposition of $A$: $A=V(A^*A)^{1/2}$, where $V$ is an $m$-by-$n$ partial isometry with $\ker V=\ker A$ (cf. \cite[Problem 134]{3}). Then
\begin{align*}
& \, m(A)=\min\{\|Ax\|:x\in \mathbb{C}^n, \|x\|=1\}\\
= & \, \min\{\|V(A^*A)^{1/2}x\|:x\in \mathbb{C}^n, \|x\|=1\}\\
= & \, \min\{\|(A^*A)^{1/2}x\|:x\in \mathbb{C}^n, \|x\|=1\}\\
= & \, \mbox{minimum eigenvalue of } (A^*A)^{1/2}\\
= & \, \mbox{minimum singular value of } A.
\end{align*}\end{proof}

To prove Theorem \ref{t31} (b), we need another lemma to get around the restriction $A_1\ldots A_{k-1}\neq 0$.

\vspace{3mm}

\begin{lemma}\label{l33}
If $A_j$ is an $n_j$-by-$n_{j+1}$ matrix, $1\le j\le k-1$, such that $A_1\ldots A_{k-1}= 0$, then for any $\varepsilon>0$, there are $n_j$-by-$n_{j+1}$ matrices $B_j$ such that $\|B_j-A_j\|<\varepsilon$ for all $j$ and $B_1\ldots B_{k-1}\neq 0$.
\end{lemma}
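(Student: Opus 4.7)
My plan is to construct a one-parameter family of perturbations $B_j(t) := A_j + t C_j$ for suitably chosen fixed matrices $C_j$, and then argue that the product $B_1(t) \cdots B_{k-1}(t)$, viewed as a matrix-valued polynomial in the real parameter $t$, is nonzero for all but finitely many values of $t$. Taking $t$ small (and outside the finite bad set) then yields the required $B_j$'s.

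Concretely, let $C_j$ be the $n_j$-by-$n_{j+1}$ matrix with a $1$ in the $(1,1)$ entry and $0$'s elsewhere, so that $\|C_j\|=1$. A direct calculation shows that $C_1 C_2 \cdots C_{k-1}$ is the $n_1$-by-$n_k$ matrix with a $1$ in the $(1,1)$ position and zeros elsewhere, hence is nonzero. Expanding
\[
P(t) := \prod_{j=1}^{k-1}(A_j + t C_j)
\]
by multilinearity shows that $P(t)$ is a matrix-valued polynomial in $t$ of degree at most $k-1$, whose coefficient of $t^{k-1}$ is exactly $C_1 \cdots C_{k-1}$. Consequently the $(1,1)$ entry of $P(t)$ is a scalar polynomial of degree exactly $k-1$ with leading coefficient $1$, and therefore has at most $k-1$ roots. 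Choosing any $t_0 \in (0, \varepsilon)$ that avoids these finitely many roots and setting $B_j := A_j + t_0 C_j$ yields $\|B_j - A_j\| = t_0 \|C_j\| = t_0 < \varepsilon$ for every $j$ and $B_1 \cdots B_{k-1} = P(t_0) \neq 0$, as required.

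The only conceivably delicate step is guaranteeing that the leading matrix coefficient of $P(t)$ is genuinely nonzero, which is precisely why I pick the rank-one $C_j$'s above: their product has an obvious nonzero $(1,1)$ entry by direct multiplication. No induction on $k$ or case analysis on the block sizes is required, and the argument is uniform in $n_1,\ldots,n_k$ (all of which are at least $1$, since otherwise the hypothesis $A_1 \cdots A_{k-1}=0$ is vacuous).
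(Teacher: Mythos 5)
Your proof is correct, and it takes a genuinely different route from the paper. The paper proves the lemma by induction on $k$: the base case $k=3$ is handled by a four-way case analysis according to which of $A_1$, $A_2$ vanishes, and the inductive step reuses the observation that when a partial product is already nonzero one only needs to perturb the last factor. You instead use a genericity argument: the one-parameter family $P(t)=\prod_j(A_j+tC_j)$ with rank-one $C_j=e_1e_1^*$ is a matrix polynomial whose top coefficient $C_1\cdots C_{k-1}$ has $(1,1)$ entry equal to $1$, so the $(1,1)$ entry of $P(t)$ is a monic scalar polynomial of degree $k-1$ and vanishes for at most $k-1$ values of $t$; any small $t_0$ off this finite set works, and $\|B_j-A_j\|=t_0\|C_j\|=t_0<\varepsilon$. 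All steps check out (the leading coefficient is indeed $C_1\cdots C_{k-1}$ by multilinearity, and its $(1,1)$ entry is $1$ by direct computation). Your argument is shorter, uniform in $k$ and in the block sizes, and avoids the case analysis entirely; what the paper's more hands-on induction buys is an explicit perturbation of a single factor and the auxiliary fact (used in its own inductive step) that if $A_1\cdots A_{k-2}\neq 0$ one need only modify $A_{k-1}$ — but nothing in the rest of the paper depends on that finer information, so your proof is a clean substitute.
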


\begin{proof}
This is proven by induction on $k$. The case of $k=2$ is trivial. We now assume that $k=3$ and $A_1A_2=0$. Consider the following four cases separately:

\vspace{3mm}

(i) $A_1=0$ and $A_2=0$. Let $B_1$ (resp., $B_2$) be the $n_1$-by-$n_2$ (resp., $n_2$-by-$n_3$) matrix with its $(1, 1)$-entry equal to $\varepsilon/2$ and all other entries 0. Then $B_1B_2$ has the $(1, 1)$-entry $\varepsilon^2/4$, and hence is nonzero.

\vspace{3mm}

(ii) $A_1\neq 0$ and $A_2=0$. Assume that $a_{ij}$, the $(i, j)$-entry of $A_1$, is nonzero. Let $B_1=A_1$ and let $B_2$ be the $n_2$-by-$n_3$ matrix with its $(j, 1)$-entry equal to $\varepsilon/2$ and all others 0. Then the $(i, 1)$-entry of $B_1B_2$ is $a_{ij}\varepsilon/2$, which is nonzero. Hence $B_1B_2\neq 0$.

\vspace{3mm}

(iii) $A_1= 0$ and $A_2\neq 0$. By symmetry, this case can be dealt with as in (ii).

\vspace{3mm}

(iv) $A_1, A_2\neq 0$. Assume that $x_i^T$, the $i$th row of $A_1$, and $y_j$, the $j$th column of $A_2$, are nonzero. Since $x_i^Ty_j=0$, we may perturb $y_j$ slightly to a column vector $z_j$ such that $x_i^Tz_j\neq 0$. Let $B_1=A_1$ and $B_2$ be obtained from $A_2$ by replacing its $y_j$ by $z_j$. Then $B_1B_2\neq 0$.

\vspace{3mm}

Note that in (ii) and (iv) above, we have actually shown that if $A_1A_2=0$ and $A_1\neq 0$, then for any $\varepsilon>0$ there is a matrix $B_2$ such that $\|B_2-A_2\|<\varepsilon$ and $A_1B_2\neq 0$. This will be used in the induction process below.

\vspace{3mm}

Assume that our assertion is true for $k-2$ and that $A_1\ldots A_{k-1}=0$. If $A_1\ldots A_{k-2}=0$, then the induction hypothesis implies, for each $\varepsilon>0$, the existence of matrices $B_1, \ldots, B_{k-2}$ such that $\|B_j-A_j\|<\varepsilon$ for $1\le j\le k-2$ and $B_1\ldots B_{k-2}\neq 0$. If $(B_1\ldots B_{k-2})A_{k-1}\neq 0$, then simply let $B_{k-1}=A_{k-1}$; otherwise, from (ii) and (iv) above, there is a matrix $B_{k-1}$ such that $\|B_{k-1}-A_{k-1}\|<\varepsilon$ and $(B_1\ldots B_{k-2})B_{k-1}\neq 0$. On the other hand, if $A_1\ldots A_{k-2}\neq 0$, then, since $(A_1\ldots A_{k-2})A_{k-1}= 0$, (ii) and (iv) above yields a matrix $B_{k-1}$ such that $\|B_{k-1}-A_{k-1}\|<\varepsilon$ and $(A_1\ldots A_{k-2})B_{k-1}\neq 0$. Letting $B_j=A_j$ for $1\le j\le k-2$ proves our assertion.
\end{proof}

We are now ready to prove Theorem \ref{t31}.

\begin{proof}[Proof of Theorem \ref{t31}]
(a) First assume that $A_1\ldots A_{k-1}\neq 0$. Since $A''$ is an (entrywise) nonnegative matrix, there is a unit vector $y=[y_1 \ \ldots \ y_k]^T$ in $\mathbb{C}^k$ with $y_j\ge 0$ for all $j$ such that $\langle A''y,y\rangle=w(A'')$ (cf. \cite[Proposition 3.3]{5}). Let $u$ be a unit vector in $\mathbb{C}^{n_k}$ such that $A_1\ldots A_{k-1}u\neq 0$, let $x_j=A_jA_{j+1}\ldots A_{k-1}u/\|A_jA_{j+1}\ldots A_{k-1}u\|$ for $1\le j\le k-1$ and $x_k=u$, and let $v=[y_1x_1 \ \ldots \ y_kx_k]^T$. Then $v$ is a unit vector in $\mathbb{C}^n$ since
\[\|v\|=\big(|y_1|^2\|x_1\|^2+\cdots+|y_k|^2\|x_k\|^2\big)^{1/2}=\big(|y_1|^2+\cdots+|y_k|^2\big)^{1/2}=1.\]
Moreover,
\[\langle Av,v\rangle=\sum_{j=1}^{k-1}\langle A_j(y_{j+1}x_{j+1}), y_jx_j\rangle=\sum_{j=1}^{k-1}y_{j+1}y_j\langle A_jx_{j+1},x_j\rangle.\]
Note that
\begin{align*}
& \, \langle A_jx_{j+1},x_j\rangle = \Big\langle\frac{A_jA_{j+1}\ldots A_{k-1}u}{\|A_{j+1}\ldots A_{k-1}u\|}, \frac{A_j\ldots A_{k-1}u}{\|A_j\ldots A_{k-1}u\|}\Big\rangle\\
= & \, \frac{\|A_j\ldots A_{k-1}u\|}{\|A_{j+1}\ldots A_{k-1}u\|} \ge m(A_j).
\end{align*}
Hence
\begin{equation}
\langle Av,v\rangle\ge \sum_{j=1}^{k-1}y_{j+1}y_jm(A_j)=\langle A''y, y\rangle=w(A''). \label{eq5}
\end{equation}
It follows that $w(A)\ge w(A'')$ as asserted.

\vspace{3mm}

Now if $A_1\ldots A_{k-1}=0$, then, for any $\varepsilon>0$, let $B_1, \ldots, B_{k-1}$ be as in Lemma \ref{l33}, and let
\[B=\left[\begin{array}{cccc} 0 & B_1 & & \\ & 0 & \ddots & \\ & & \ddots & B_{k-1} \\ & & & 0\end{array}\right]  \mbox{ on } \mathbb{C}^n \mbox{ and }
B''=\left[\begin{array}{cccc} 0 & m(B_1) & & \\ & 0 & \ddots & \\ & & \ddots & m(B_{k-1}) \\ & & & 0\end{array}\right]  \mbox{ on } \mathbb{C}^k.\]
From the first half of the proof, we have $w(B)\ge w(B'')$. Since
\[\|A''-B''\|=\max_j|m(A_j)-m(B_j)|\le\max_j\|A_j-B_j\|<\varepsilon\]
(cf. \cite[Lemma 2.2 (1)]{10}), we infer from the continuity of $w(\cdot)$ that $w(A)\ge w(A'')$ (cf. \cite[Problem 220]{3}). This completes the proof of (a).

\vspace{3mm}

(b) Assume that $A_1\ldots A_{k-1}\neq 0$ and $w(A)=w(A'')$. Let $y=[y_1\ \ldots \ y_k]^T\in \mathbb{C}^k$, $u\in \mathbb{C}^{n_k}$, $x_j\in\mathbb{C}^{n_j}$ for $1\le j\le k$, and $v\in \mathbb{C}^n$ be as in the first half of the proof of (a). Let $\widehat{x}_j=[0 \ \ldots \ 0 \ \stb{x_j}{j{\rm th}} \ 0 \ \ldots \ 0]^T$ for $1\le j\le k$, and let $K$ be the subspace of $\mathbb{C}^n$ spanned by the $\widehat{x}_j$'s. Since $A\widehat{x}_1=0$ and
\begin{equation}
A\widehat{x}_j=\big[0 \ \ldots \ 0 \ \btb{\frac{A_{j-1}A_j\ldots A_{k-1}u}{\|A_j\ldots A_{k-1}u\|}}{(j-1)\mbox{st}} \ 0 \ \ldots \ 0]^T=\frac{\|A_{j-1}\ldots A_{k-1}u\|}{\|A_j\ldots A_{k-1}u\|}\widehat{x}_{j-1}, \ 2\le j\le k, \label{eq6}
\end{equation}
we obtain $AK\subseteq K$.

\vspace{3mm}

We next show that $A^*K\subseteq K$. Indeed, since $w(A)=w(A'')$, we have an equality in \eqref{eq5}, which yields that $\|A_{j}\ldots A_{k-1}u\|/\|A_{j+1}\ldots A_{k-1}u\|=m(A_j)$ for all $j$, $1\le j\le k-1$. This is because $A_j\neq 0$ for all $j$ and thus $A''$ is a nonnegative matrix with irreducible real part, from which we infer that $y_j>0$ for all $j$ (cf. \cite[Proposition 3.3]{5}). Since the $x_j$'s are unit vectors satisfying $\|A_jx_{j+1}\|=m(A_j)$, we have $\langle (A_j^*A_j-m(A_j)^2I_{n_{j+1}})x_{j+1}, x_{j+1}\rangle=0$, $1\le j\le k-1$. From $A_j^*A_j\ge m(A_j)^2I_{n_{j+1}}$, we infer that $A_j^*A_jx_{j+1}=m(A_j)^2x_{j+1}$ and hence $A_j^*A_jA_{j+1}\ldots A_{k-1}u=m(A_j)^2A_{j+1}\ldots A_{k-1}u$. This shows that $A_j^*x_j$ is a multiple of $x_{j+1}$ and thus $A^*\widehat{x}_j$ is a multiple of $\widehat{x}_{j+1}$ for $1\le j\le k-1$. Therefore, $A^*\widehat{x}_j$ is in $K$ for all $j$, $1\le j\le k-1$. Together with $A^*\widehat{x}_k=0$, these imply that $A^*K\subseteq K$. Hence $A$ is unitarily similar to $(A|K)\oplus(A|K^{\perp})$. Since $A\widehat{x}_1=0$ and $A\widehat{x}_j=m(A_{j-1})\widehat{x}_{j-1}$, $2\le j\le k$, from \eqref{eq6}, we have the unitary similarity of $A|K$ and $A''$. On the other hand, the unitary similarity of $A|K^{\perp}$ to a block shift follows as in the last part of the proof of Theorem \ref{t21} (b).
\end{proof}

\vspace{1mm}

\begin{corollary}\label{c34}
Let $A$ be an $n$-by-$n$ block shift as in \eqref{eq1}, and let $m=\min_jm(A_j)$. Then

{\rm (a)} $w(A)\ge m\cdot\cos(\pi/(k+1))$, and

{\rm (b)} $w(A)=m\cdot\cos(\pi/(k+1))$ if and only if $A$ is unitarily similar to $(m\cdot J_k)\oplus C$, where $C$ is a block shift with $w(C)\le m\cdot\cos(\pi/(k+1))$.
\end{corollary}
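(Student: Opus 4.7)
The plan is to deduce Corollary~\ref{c34} from Theorem~\ref{t31} in exactly the same way that Corollary~\ref{c22} was deduced from Theorem~\ref{t21}. The key ingredient is the analogous reading of \cite[Lemma~5]{8}: for a $k$-by-$k$ scalar upper-triangular shift with super-diagonal entries $a_1,\ldots,a_{k-1}\ge 0$, one has $w\ge (\min_j a_j)\cos(\pi/(k+1))$, with the additional characterization that equality (when $\min_j a_j>0$) forces all the $a_j$ to be equal.

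For part~(a), I would apply Theorem~\ref{t31}(a) to obtain $w(A)\ge w(A'')$, and then observe that the super-diagonal entries $m(A_j)$ of $A''$ are all $\ge m$, so the above monotonicity gives $w(A'')\ge m\cos(\pi/(k+1))$.

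For part~(b), the ``if'' direction is immediate from $w((m\cdot J_k)\oplus C)=\max\{w(m\cdot J_k),w(C)\}=m\cos(\pi/(k+1))$. For the ``only if'' direction, assume $w(A)=m\cos(\pi/(k+1))$. If $m=0$ the conclusion is vacuous: $w(A)=0$ forces $A=0$, which decomposes as $0\cdot J_k\oplus 0$. Suppose instead $m>0$. Then by Lemma~\ref{l32}(a) every $A_j$ is left invertible, in particular injective, and a short induction on $k$ shows $A_1\ldots A_{k-1}\neq 0$. By part~(a), the chain $w(A)\ge w(A'')\ge m\cos(\pi/(k+1))$ collapses to equalities, so both $w(A)=w(A'')$ and $w(A'')=w(m\cdot J_k)$ hold. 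The second equality, combined with the entrywise ordering $A''\ge m\cdot J_k$ and the irreducibility of $m\cdot J_k$ (valid since $m>0$), forces $A''=m\cdot J_k$ by the strict-monotonicity half of the auxiliary lemma, i.e.\ $m(A_j)=m$ for all $j$. The first equality together with $A_1\ldots A_{k-1}\neq 0$ then permits invoking Theorem~\ref{t31}(b), yielding a unitary similarity $A\cong A''\oplus C=(m\cdot J_k)\oplus C$ with $w(C)\le w(A'')=m\cos(\pi/(k+1))$, as required.

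The only delicate point is the strict-monotonicity direction needed to pass from $w(A'')=m\cos(\pi/(k+1))$ to $A''=m\cdot J_k$. At the level of first principles this is a statement about the Perron root of the symmetric tridiagonal matrix $\tfrac12(A''+(A'')^T)$, and the required strictness follows from applying the classical Perron--Frobenius theorem to the irreducible matrix $\tfrac12(m\cdot J_k+m\cdot J_k^T)$; assuming \cite[Lemma~5]{8} already packages this, the remainder of the argument is a mechanical transcription of the proof of Corollary~\ref{c22} with ``$\max$'' replaced by ``$\min$'' and ``$\|\cdot\|$'' replaced by ``$m(\cdot)$''.
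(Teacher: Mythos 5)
Your proposal is correct and follows exactly the route the paper intends: the paper's entire proof is the remark that Corollary~\ref{c34} follows from Theorem~\ref{t31} together with \cite[Lemma 5]{8}, just as Corollary~\ref{c22} follows from Theorem~\ref{t21}. You in fact supply a detail the paper leaves implicit --- that in the equality case with $m>0$, Lemma~\ref{l32}(a) makes every $A_j$ injective, so $A_1\cdots A_{k-1}\neq 0$ and the hypothesis of Theorem~\ref{t31}(b) is automatically met --- which is a genuine and correctly handled point.
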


\vspace{3mm}

This can be proven as Corollary \ref{c22} by using Theorem \ref{t31} and \cite[Lemma 5]{8}.

\vspace{3mm}

Analogous to the situation in Section \ref{s2}, Theorem \ref{t31} (b) remains true for $n\le 3$ without the assumption $A_1\ldots A_{k-1}\neq 0$. This is no longer the case for $n\ge 4$. A counterexample for $n=4$ is given below.

\vspace{3mm}

\begin{example}
Let
\[A=\left[\begin{array}{cccc} 0 & 1 & 1 & \\ & 0 & 0 & 1\\ & 0 & 0 & -1 \\ & & & 0\end{array}\right]\]
with $A_1=[1 \ 1]$ and $A_2=\left[\begin{array}{c} 1\\ -1\end{array}\right]$. In this case, $A_1A_2=[0]$ and
\[A''=\left[\begin{array}{ccc} 0 & 0 &  \\ & 0 & \sqrt{2} \\  & & 0\end{array}\right].\]
Since $A^2=0$, we have $w(A)=\|A\|/2=\sqrt{2}/2$ (cf. \cite[Theorem 2.1]{9}). On the other hand, we also have $w(A'')=\sqrt{2}/2$. But $A''$ is not a direct summand of $A$. This is because if it is, then $A$ will be unitarily similar to $A''\oplus[0]$, which is impossible since $\ker A\cap\ker A^*=\{0\}$.
\end{example}

\vspace{3mm}

A larger parameter than the minimum modulus of an $m$-by-$n$ matrix $A$ is its \emph{reduced minimum modulus} $\gamma(A)$ defined by
\[\gamma(A)=\left\{\begin{array}{ll} \min\{\|Ax\|:x\in \mathbb{C}^n, x\perp\ker A, \|x\|=1\} \ \ & \mbox{ if } \ A\neq 0,\\
0 & \mbox{ if } \ A= 0.\end{array}\right.\]
A general reference for $\gamma(A)$ (when $A$ is an operator on a possibly infinite-dimensional Hilbert space) is \cite{1}. For an $n$-by-$n$ block shift $A$ of the form \eqref{eq1}, consider the $k$-by-$k$ matrix
\[A'''=\left[\begin{array}{cccc} 0 & \gamma(A_1) &  & \\ & 0 & \ddots & \\ &  & \ddots & \gamma(A_{k-1}) \\ & & & 0\end{array}\right].\]
We may expect to have $w(A''')$ as a lower bound for $w(A)$ under some extra conditions on $A$. The next theorem shows that this is indeed the case for small values of $k$.

\vspace{3mm}

\begin{theorem}\label{t36}
{\rm (i)} Let $A=\left[\begin{array}{cc} 0 & A_1  \\ 0 & 0\end{array}\right]$ and $A'''=\left[\begin{array}{cc} 0 & \gamma(A_1)  \\ 0 & 0\end{array}\right]$. Then

{\rm (a)} $w(A)\ge w(A''')$, and

{\rm (b)} $w(A)=w(A''')$ if and only if $A$ is unitarily similar to $A'''\oplus\cdots\oplus A'''\oplus 0$.

\vspace{3mm}

{\rm (ii)} Let
\[A=\left[\begin{array}{ccc} 0 & A_1  \\ & 0 & A_2\\ & & 0\end{array}\right] \mbox{on } \mathbb{C}^n=\mathbb{C}^{n_1}\oplus \mathbb{C}^{n_2}\oplus \mathbb{C}^{n_3} \mbox{ and } A'''=\left[\begin{array}{ccc} 0 & \gamma(A_1) &   \\  & 0 & \gamma(A_{2}) \\  & & 0\end{array}\right] \mbox{on } \mathbb{C}^3.\]
Assume that ${\rm rank} \, A_1+ {\rm rank} \, A_2>n_2$. Then

{\rm (a)} $w(A)\ge w(A''')$, and

{\rm (b)} $w(A)=w(A''')$ if and only if $A$ is unitarily similar to $A'''\oplus C$, where $C$ is a block shift with $w(C)\le w(A''')$.
\end{theorem}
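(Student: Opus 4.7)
For part (i) the plan is direct: both $A$ and $A'''$ are square-zero, so by \cite[Theorem 2.1]{9} one has $w(A)=\|A\|/2=\|A_1\|/2$ and $w(A''')=\gamma(A_1)/2$. The inequality $\|A_1\|\ge\gamma(A_1)$ (largest singular value versus smallest nonzero singular value, immediate from the polar decomposition used in Lemma \ref{l32}(b)) gives (a). For (b), equality forces every nonzero singular value of $A_1$ to equal $s=\|A_1\|=\gamma(A_1)$. I would take orthonormal bases $\{e_1,\ldots,e_{n_2}\}$ of $\mathbb{C}^{n_2}$ and $\{f_1,\ldots,f_{n_1}\}$ of $\mathbb{C}^{n_1}$ with $A_1 e_i=s f_i$ for $i\le r=\mathrm{rank}(A_1)$ and $A_1 e_i=0$ otherwise; then each $2$-dimensional subspace spanned by $(f_i,0)$ and $(0,e_i)$ is reducing for $A$ and carries a copy of $A'''$, while $A$ vanishes on the orthogonal complement of these $r$ subspaces.

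Part (ii)(a) will follow the template of Theorem \ref{t31}(a) with $\gamma(\cdot)$ in place of $m(\cdot)$. The rank hypothesis $\mathrm{rank}\,A_1+\mathrm{rank}\,A_2>n_2$ is a dimension count in $\mathbb{C}^{n_2}$ which yields $\mathrm{range}(A_2)\cap(\ker A_1)^\perp\neq\{0\}$, so I can pick a unit $u\in(\ker A_2)^\perp$ with $A_2 u\in(\ker A_1)^\perp\setminus\{0\}$. Define $x_3=u$, $x_2=A_2 u/\|A_2 u\|$ and $x_1=A_1 A_2 u/\|A_1 A_2 u\|$. Orthogonality to the kernels gives $\langle A_2 x_3,x_2\rangle=\|A_2 u\|\ge\gamma(A_2)$ and $\langle A_1 x_2,x_1\rangle=\|A_1 A_2 u\|/\|A_2 u\|\ge\gamma(A_1)$. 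Choosing the non-negative Perron unit vector $y=[y_1,y_2,y_3]^T$ of $A'''$ (whose real part is irreducible since $\gamma(A_1),\gamma(A_2)>0$) via \cite[Proposition 3.3]{5}, the unit vector $v=[y_1 x_1,y_2 x_2,y_3 x_3]^T$ satisfies $\langle Av,v\rangle\ge y_1 y_2\gamma(A_1)+y_2 y_3\gamma(A_2)=w(A''')$.

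For (ii)(b), $w(A)=w(A''')$ forces equalities throughout the chain above. Together with $y_j>0$ (from irreducibility), this gives $\|A_2 u\|=\gamma(A_2)$ with $u\perp\ker A_2$ and $\|A_1(A_2 u)\|=\gamma(A_1)\|A_2 u\|$ with $A_2 u\perp\ker A_1$. By the variational characterization of $\gamma$, these equalities identify $u$ as an eigenvector of $A_2^*A_2$ for eigenvalue $\gamma(A_2)^2$ and $A_2 u$ as an eigenvector of $A_1^*A_1$ for $\gamma(A_1)^2$. Consequently $A_2^*x_2=\gamma(A_2)x_3$ and $A_1^*x_1=\gamma(A_1)x_2$, so $K=\mathrm{span}\{\widehat{x}_1,\widehat{x}_2,\widehat{x}_3\}$ is both $A$- and $A^*$-invariant and the matrix of $A|K$ in the orthonormal basis $\{\widehat{x}_1,\widehat{x}_2,\widehat{x}_3\}$ is exactly $A'''$. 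The block-shift structure on $K^\perp$ is then obtained exactly as in the final paragraph of the proof of Theorem \ref{t21}(b). The main obstacle is securing the existence of $u$ with $u\perp\ker A_2$ and $A_2 u\perp\ker A_1$ simultaneously; this is precisely where the rank hypothesis is essential, and without it the $\gamma$-lower bound can fail to be attainable by any such constructed test vector.
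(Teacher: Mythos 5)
The paper gives no proof of Theorem \ref{t36} (it only remarks that the argument parallels Theorem \ref{t31}), so there is nothing to match line by line; judged on its own, your proposal is correct and fills in exactly the intended argument. For part (ii) you reproduce the Theorem \ref{t31} template with $\gamma(\cdot)$ in place of $m(\cdot)$, and you correctly isolate the one new ingredient: the dimension count $\operatorname{rank}A_1+\operatorname{rank}A_2>n_2$ gives a nonzero vector in $\operatorname{range}(A_2)\cap(\ker A_1)^{\perp}$, hence a unit $u\perp\ker A_2$ with $A_2u\perp\ker A_1$, which is what makes both estimates $\|A_2u\|\ge\gamma(A_2)$ and $\|A_1A_2u\|\ge\gamma(A_1)\|A_2u\|$ available (note the hypothesis also forces $A_1,A_2\neq 0$, so $\operatorname{Re}A'''$ is irreducible and $y_j>0$, as you use). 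The equality analysis via $PA_j^*A_jP\ge\gamma(A_j)^2P$ on $(\ker A_j)^{\perp}$, together with $A_j^*A_ju\in\operatorname{range}(A_j^*)=(\ker A_j)^{\perp}$, correctly yields $A_j^*x_j=\gamma(A_j)x_{j+1}$ and the reducing subspace $K$. For part (i) you take a genuinely shorter route than the $k=2$ specialization of that template: since $A^2=0$, the formula $w(A)=\|A\|/2$ from \cite[Theorem 2.1]{9} reduces everything to comparing the largest and smallest nonzero singular values of $A_1$, and the singular value decomposition immediately delivers the $A'''\oplus\cdots\oplus A'''\oplus 0$ structure in the equality case; this is cleaner for (i)(b), whose conclusion (possibly several copies of $A'''$) does not fit the single-direct-summand pattern of Theorems \ref{t21} and \ref{t31}.
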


\vspace{3mm}

The proof is quite similar to the one for Theorem \ref{t31}, which we omit. For larger values of $k$, the extra conditions on the $A_j$'s are two cumbersome to be of any practical use.

\vspace{3mm}

\section*{\bf Acknowledgments}
The two authors acknowledge supports from the Ministry of Science and Technology of the Republic of China under projects MOST 103-2115-M-008-006 and NSC 102-2115-M-009-007, respectively. The second author was also supported by the MOE-ATU.

\vspace{3mm}

\end{document}